\begin{document}

\newcounter{lemma}
\newcommand{\lemma}{\par \refstepcounter{lemma}%
{\bf Лема \arabic{lemma}.}}

\newcounter{corollary}
\newcommand{\corollary}{\par \refstepcounter{corollary}%
{\bf Наслідок \arabic{corollary}.}}

\newcounter{remark}
\newcommand{\remark}{\par \refstepcounter{remark}%
{\bf Зауваження \arabic{remark}.}}

\newcounter{theorem}
\newcommand{\theorem}{\par \refstepcounter{theorem}%
{\bf Теорема \arabic{theorem}.}}

\newcounter{proposition}
\newcommand{\proposition}{\par \refstepcounter{proposition}%
{\bf Твердження \arabic{proposition}.}}

\newcounter{example}
\newcommand{\example}{\par \refstepcounter{example}%
{\bf Приклад \arabic{example}.}}

\renewcommand{\refname}{\centerline{\bf Список літератури}}

\renewcommand{\figurename}{Мал.}

\newcommand{\proof}{{\it Доведення.\,\,}}

\noindent УДК 517.5

\medskip\medskip
{\bf А.П.~Довгопятый} (Житомирский государственный университет имени
Ивана Франко)

{\bf Е.А.~Севостьянов} (Житомирский государственный университет
имени Ивана Франко; Институт прикладной математики и механики НАН
Украины, г.~Славянск)

\medskip\medskip
{\bf О.П.~Довгопятий} (Житомирський державний університет імені
Івана Фран\-ка)

{\bf Є.О.~Севостьянов} (Житомирський державний університет імені
Івана Фран\-ка; Інститут прикладної математики і механіки НАН
України, м.~Слов'янськ)

\medskip\medskip
{\bf O.P.~Dovhopiatyi} (Zhytomyr Ivan Franko State University)

{\bf E.A.~Sevost'yanov} (Zhytomyr Ivan Franko State University;
Institute of Applied Ma\-the\-ma\-tics and Mechanics of NAS of
Ukraine, Slov'yans'k)

\medskip
{\bf О существовании решений квазилинейных уравнений Бельтрами с
двумя характеристиками}

{\bf Про існування розв'язків квазілінійних рівнянь Бельтрамі с
двома характеристиками}

{\bf On the existence of solutions of quasilinear Beltrami equations
with two characteristics}

\medskip\medskip
Изучаются уравнения типа Бельтрами с двумя заданными комплексными
характеристиками. При определённых условиях на комплексные
коэффициенты получены теоремы о существовании гомеоморфных
$ACL$-решений этого уравнения. Кроме того, при некоторых
относительно слабых условиях получены теоремы о существовании
соответствующих непрерывных $ACL$-решений, которые являются
логарифмически гёльдеровыми в заданной области.

\medskip\medskip
Вивчаються рівняння типу Бельтрамі з двома заданими комплексними
характеристиками. За певних умов на комплексні коефіцієнти отримано
теореми про існування гомеоморфних $ACL$-розв'язків цього рівняння.
Крім того, за деяких відносно слабких умов отримано теореми про
існування відповідних неперервних $ACL$-розв'язків, які є
логарифмічно гельдеровими в заданій області.

\medskip\medskip
We study Beltrami-type equations with two given complex
characteristics. Under certain conditions on the complex
coefficients, we obtained theorems on the existence of homeomorphic
$ ACL $ -solutions of this equation. In addition, for some
relatively weak conditions, we obtained theorems on the existence of
the corresponding continuous $ ACL $ -solutions that are loga
-rith\-mi\-cally H\"{o}lder in a given domain.

\newpage
{\bf 1. Вступ.} Відносно нещодавно були отримані результати про
існування розв'язків виродженого рівняння Бельтрамі з двома
характеристиками, див., напр., \cite{BGR$_1$}--\cite{BGR$_3$}
і~\cite[розділ~9]{GRSY}. Відзначимо, що вказані результати переважно
стосуються випадку, коли максимальна дилатація рівняння має
скінченне середнє коливання в кожній точці, або задовольняє певну
умову розбіжності інтегралу. В даній статті ми розглянемо аналогічні
квазілінійні рівняння, тобто, коли відповідні коефіцієнти можуть
залежати від його розв'язку. Вказана ситуація достатньо повно
розглянута в роботі другого автора, але лише в випадку одного
комплексного коефіцієнта (\cite{Sev}). Окремо будуть розглянуті
умови, які забезпечують наявність не гомеоморфних, а просто
неперервних розв'язків. Слід зауважити, що ці розв'язки мають вищу
степінь гладкости і є логарифмічно неперервними за Гельдером (див.
відповідні публікації \cite{Sev$_1$} і \cite{SevSkv} для рівнянь з
одним коефіцієнтом).

\medskip
Перейдемо до означень. Скрізь далі відображення $f:D\rightarrow
{\Bbb C}$ області $D\subset{\Bbb C}$ вважається таким, що {\it
зберігає орієнтацію,} тобто, якщо $f$ -- відкрите дискретне
відображення і $z\in D$ -- яка-небудь його точка диференційовності,
то {\it якобіан} цього відображення в точці $z$ невід'ємний (див.,
напр., \cite[лема~2.14]{MRV$_1$}). Для комплекснозначної функції
$f:D\rightarrow {\Bbb C},$ заданої в області $D\subset {\Bbb C},$ що
має частинні похідні по $x$ і $y$ при майже всіх $z=x + iy,$
покладемо $f_{\overline{z}} = (f_x + if_y)/2$ і $f_z = (f_x -
if_y)/2.$ Будемо говорити, що функція
$\nu=\nu\left(z,w\right):D\times{\Bbb C} \rightarrow {\Bbb D}$
задовольняє {\it умову Каратеодорі,} якщо $\nu$ вимірна по $z\in D$
при кожному фіксованому $w\in{\Bbb C}$ і неперервна по $w\in{\Bbb
C}$ при майже всіх $z\in D.$ Нехай функції $\mu=\mu(z, w)$ і
$\nu=\nu(z, w)$  задовольняють умову Каратеодорі і, крім того,
$|\mu(z, w)|+|\nu(z, w)|<1$ при всіх $w\in {\Bbb C}$ і майже всіх
$z\in D.$ Покладемо
\begin{equation}\label{eq1}
K_{\mu, \nu}(z,w)=\quad\frac{1+|\mu (z, w)|+|\nu(z, w)|}{1-|\mu\,(z,
w)|-|\nu(z, w)|}\,.
\end{equation}
Зауважимо, що якобіан відображення $f$ в точці $z\in D$ можна
порахувати за допомогою рівності
$$J(z,
f)=|f_z|^2-|f_{\overline{z}}|^2\,.$$
Нехай $\mu=\mu(z, w): D\times {\Bbb C}\rightarrow {\Bbb D}$ і
$\nu=\nu(z, w): D\times {\Bbb C}\rightarrow {\Bbb D}$ функції такі,
що при кожному фіксованому $w\in {\Bbb C}$ виконано умову $|\mu(z,
w)|+|\nu(z, w)|<1$ при майже всіх $z\in D.$ Розглянемо {\it
квазілінійне рівняння Бельтрамі з двома характеристиками:}
\begin{equation}\label{eq1:}
f_{\overline{z}}=\mu(z, f(z))\cdot f_z+\nu(z, f(z))\cdot
\overline{f_z}\,.
\end{equation}
Функція $K_{\mu, \nu}(z,w)$ в~(\ref{eq1}) називається {\it
дилатацією рівняння~(\ref{eq1:})}. Відображення $f:D\rightarrow
{\Bbb C}$ будемо називати {\it регулярним розв'язком
рівняння~(\ref{eq1:}),} якщо $f\in W_{\rm loc}^{1,1}$ і $J(z, f)\ne
0$ майже скрізь у $D.$ Нехай $D$ -- область в ${\Bbb R}^n,$
$n\geqslant 2.$ Будемо говорити, що функція
${\varphi}:D\rightarrow{\Bbb R},$ що є локально інтегровною в
деякому околі точки $x_0\in D,$ має {\it скінченне середнє
коливання} в точці $x_0$ (пишемо: $\varphi\in FMO(x_0)$), якщо
\begin{equation}\label{eq17:}
{\limsup\limits_{\varepsilon\rightarrow
0}}\frac{1}{\Omega_n\varepsilon^n}\int\limits_{B(
x_0,\,\varepsilon)}
|{\varphi}(x)-\overline{{\varphi}}_{\varepsilon}|\ dm(x)\, <\,
\infty\,,
\end{equation}
де $\Omega_n$ -- об'єм одиничної кулі в ${\Bbb R}^n,$
$\overline{{\varphi}}_{\varepsilon}=\frac{1}{\Omega_n\varepsilon^n}\int\limits_{B(
x_0,\,\varepsilon)} {\varphi}(x)\ dm(x)$ (див., напр.,
\cite[розд.~2]{RSY$_2$}). Позначимо через $q_{z_0}(r)$ середнє
значення функції $Q$ над колом $$S(z_0, r)=\{|z-z_0|=r\}\,,$$
\begin{equation}\label{eq2}
q_{z_0}(r)=\frac{1}{2\pi}\int\limits_{0}^{2\pi}Q(z_0+re^{\,i\theta})\,d\theta\,.
\end{equation}
Є правильним наступне твердження (див. також відповідний результат,
встановлений для виродженого квазілінійного рівняння Бельтрамі з
однією характеристикою у \cite[теореми 1--2]{Sev}).
\medskip
\begin{theorem}\label{th4.6.1}{\sl\,
Нехай функції $\mu(z,w): {\Bbb D}\times{\Bbb C}\rightarrow {\Bbb D}$
і $\nu(z,w): {\Bbb D}\times{\Bbb C}\rightarrow {\Bbb D}$
задовольняють умови Каратеодорі і, крім того, $|\mu(z, w)|+|\nu(z,
w)|<1$ при всіх $w\in {\Bbb C}$ і майже всіх $z\in {\Bbb D}.$
Припустимо, що існує функція $Q:{\Bbb D}\rightarrow [1, \infty]$
така, що $K_{\mu, \nu}(z, w)\leqslant Q(z)\in L_{\rm loc}^1({\Bbb
D})$ для майже всіх $z\in {\Bbb D}$ і всіх $w\in {\Bbb C},$ де
$K_{\mu, \nu}$ визначено в (\ref{eq1}). Припустимо, що $Q\in
FMO({\Bbb D}),$ або для кожного $z_0\in {\Bbb D}$ виконано умову
\begin{equation}\label{eq15:}
\int\limits_{0}^{\delta(z_0)}\frac{dr}{rq_{z_0}(r)}=\infty\,,
\end{equation}
де $\delta(z_0)$ -- деяке додатне число,
$\delta(z_0)<{\rm dist\,}(z_0,\partial {\Bbb D}),$ а $q_{z_0}(r)$
визначено в~(\ref{eq2}).
Тоді рівняння (\ref{eq1:}) має регулярний гомеоморфний розв'язок $f$
класу $W_{\rm loc}^{1,1}$ в ${\Bbb D},$ такий що $f^{\,-1}\in W_{\rm
loc}^{\,1,2}(f({\Bbb D})).$ }
\end{theorem}

\medskip
Іншого роду результати стосуються випадку, коли
рівняння~(\ref{eq1:}) має лише неперервний, але логарифмічно
гельдеревий розв'язок, див. роботи \cite{Sev$_1$}--\cite{SevSkv} з
приводу аналогічних лінійних рівнянь з одним коефіцієнтом $\mu.$
Нехай $J(z, f)\ne 0$ і нехай відображення $f$ має частинні похідні
$f_z$ і $f_{\overline{z}}$ у точці $z.$ Тоді {\it максимальною
дилатацією відображення} $f$ в точці $z$ будемо називати наступну
функцію:
\begin{equation}\label{eq1D}
K_{\mu_f}(z)=\frac{|f_z|+|f_{\overline{z}}|}{|f_z|-|f_{\overline{z}}|}\,.
\end{equation}
Покладемо $K_{\mu_f}(z)=1$ в точках $z,$ де
$|f_z|+|f_{\overline{z}}|=0$ та $K_{\mu_f}(z)=\infty$ у точках $z,$
де $|f_z|+|f_{\overline{z}}|\ne 0,$ але $J(z, f)=0.$ Визначимо також
{\it внутрішню дилатацію порядку $p\geqslant 1 $} відображення $f$
за допомогою співвідношення
\begin{equation}\label{eq17}
K_{I, p}(z,
f)=\frac{{|f_z|}^2-{|f_{\overline{z}}|}^2}{{(|f_z|-|f_{\overline{z}}|)}^p}
\end{equation}
де $J(z, f)\ne 0.$ Як і у (\ref{eq1D}), покладемо $K_{I, p}(z)=1,$
якщо $|f_z|+|f_{\overline{z}}|=0,$ і $K_{I, p}(z)=\infty$ у точках,
де $J(z, f)=0,$ проте $|f_z|+|f_{\overline{z}}|\ne 0.$ Зауважимо, що
$K_{I, 2}(z)=K_{\mu}(z).$ Покладемо $\Vert
f^{\,\prime}(z)\Vert=|f_z|+|f_{\overline{z}}|.$  Нагадаємо, що
гомеоморфізм $f:D\rightarrow {\Bbb C}$ називається {\it
квазіконформним,} якщо $f\in W_{\rm loc}^{1, 2}(D)$ і, крім того,
існує стала $K\geqslant 1$ така, що $\Vert
f^{\,\prime}(z)\Vert^2\leqslant K\cdot |J(z, f)|$ майже скрізь.

\medskip
Нехай $\mu=\mu(z, w):{\Bbb D}\times {\Bbb C}\rightarrow {\Bbb D}$ і
$\nu=\nu(z, w): {\Bbb D}\times {\Bbb C}\rightarrow {\Bbb D}$
функції, для яких існує вимірна за Лебегом функція $q:{\Bbb
D}\rightarrow [0, 1)$ така, що при кожному фіксованому $w\in {\Bbb
C}$ і майже всіх $z\in {\Bbb D}$ виконано умову $|\mu(z, w)|+|\nu(z,
w)|\leqslant q(z)< 1.$ Зафіксуємо $n\geqslant 1$ і покладемо
\begin{equation}\label{eq12:} \mu_n(z, w)= \left
\{\begin{array}{rr}
 \mu(z, w),&  Q_0(z)\leqslant n,
\\ 0\ , & Q_0(z)> n\,.
\end{array} \right.
\end{equation}
і
\begin{equation}\label{eq12:A} \nu_n(z, w)= \left
\{\begin{array}{rr}
 \nu(z, w),& Q_0(z)\leqslant n,
\\ 0\ , & Q_0(z)> n\,,
\end{array} \right.
\end{equation}
де $Q_0$ визначено рівністю
\begin{equation}\label{eq10A}
Q_0(z)=\quad\frac{1+q(z)}{1-q(z)}\,.
\end{equation}
Нехай $f_n$ -- гомеоморфний $ACL$-розв'язок рівняння~(\ref{eq1:}), в
якому покладемо $\mu\mapsto\mu_n(z, w),$ $\nu\mapsto\nu_n(z, w),$
такий що $f_n(0)=0,$ $f_n(1)=1$ (вказаний розв'язок визначено
коректно з огляду на \cite[теорема~8.2]{Bo}). Зауважимо, що
відображення $g_n=f^{\,-1}_n$ є квазіконформним; зокрема, воно є
диференційовним майже скрізь. Нехай $K_{\mu_{g_n}}(w)$ -- дилатація
оберненого відображення $g_n,$ тобто,
\begin{equation}\label{eq3D}
K_{\mu_{g_n}}(w)=\quad
\frac{{|(g_n)_w|}^2-{|(g_n)_{\overline{w}}|}^2}{{(|(g_n)_w|-|(g_n)_{\overline{w}}|)}^2}\,.
\end{equation}
Визначимо також {\it внутрішню дилатацію порядку $p$ відображення
$g_n$ в точці $w$} за допомогою рівності
\begin{equation}\label{eq18}
K_{I, p}(w,
g_n)=\frac{{|(g_n)_w|}^2-{|(g_n)_{\overline{w}}|}^2}{{(|(g_n)_w|-|(g_n)_{\overline{w}}|)}^p}\,.
\end{equation}

\medskip
Виконується наступне твердження.

\medskip
\begin{theorem}\label{th1A}{\sl\, Нехай $\mu,$ $\nu,$ $\mu_n,$
$\nu_n,$ $f_n$ і $g_n$ такі, як вказано вище. Нехай $Q:{\Bbb
D}\rightarrow[1, \infty]$ -- вимірна за Лебегом функція. Припустимо,
що виконуються наступні умови:

\medskip
1) для кожних $0<r_1<r_2<1$ і $y_0\in {\Bbb D}$ існує множина
$E\subset[r_1, r_2]$ додатної лебегової міри така, що функція $Q$ є
інтегровною по колах $S(y_0, r)$ для кожного $r\in E;$

\medskip
2) знайдеться число $1<p\leqslant 2$ і стала $M>0$ такі, що
\begin{equation}\label{eq10B}
\int\limits_{\Bbb D}K_{I, p}(w, g_n)\,dm(w)\leqslant M
\end{equation}
для всіх $n=1,2,\ldots ,$ де $K_{I, p}(w, g_n)$ визначено
у~(\ref{eq18});

\medskip
3) Нерівність
\begin{equation}\label{eq10C}
K_{\mu_{g_n}}(w)\leqslant Q(w)
\end{equation}
виконується для майже всіх $w\in {\Bbb D},$ де $K_{\mu_{g_n}}$
визначено в~(\ref{eq3D}).

Тоді рівняння~(\ref{eq1:}) має неперервний $W_{\rm loc}^{1, p}({\Bbb
D})$-розв'язок $f$ в ${\Bbb D}.$} \end{theorem}

\medskip
\begin{corollary}\label{cor5}
{\sl\, Зокрема, твердження теореми~\ref{th1A} виконується, якщо в
цій теоремі ви відмовляємося від умови~1), вимагаємо умову~3), а
умову 2) заміняємо наступною: $Q\in L^1({\Bbb D}).$ В цьому випадку,
розв'язок $f$ рівняння~(\ref{eq1:}) може бути обраним таким, що
рівність
\begin{equation}\label{eq11A}
|f(x)-f(y)|\leqslant\frac{C\cdot (\Vert
Q\Vert_1)^{1/2}}{\log^{1/2}\left(1+\frac{r_0}{2|x-y|}\right)}
\end{equation}
виконується для довільного компакту $K\subset {\Bbb D}$ і всяких $x,
y\in K,$ де $\Vert Q\Vert_1$ позначає $L^1$-норму функції $Q$ в
${\Bbb D},$ $C>0$ деяка стала і $r_0=d(K,
\partial {\Bbb D}).$ Якщо додатково $Q(z)\in FMO({\Bbb D}),$ або виконано умову~(\ref{eq15:}),
то відображення $f$ можна обрати гомеоморфізмом.}
\end{corollary}

\medskip
{\bf 2. Існування гомеоморфного розв'язку квазілінійного рівняння в
одиночному крузі.} Для зручності покладемо $\partial f=f_z,$
$\overline{\partial}f=f_{\overline{z}}.$ Наступне твердження
доведено в~\cite[лема~9.1]{GRSY}.

\medskip
\begin{proposition}\label{pr4.6.2}{\sl\,
Нехай $D\subset {\Bbb C},$ і нехай $f_n:D\rightarrow {\Bbb C}$ --
послідовність гомеоморфних розв'язків рівняння $\overline{\partial}
f_n=\mu_n(z)\partial f_n+ \nu_n(z)\overline{\partial f_n}$ класу
$W_{\rm loc}^{1, 1}$ таких, що
$$\frac{1+|\mu_n(z)|+|\nu_n(z)|}{1-|\mu_n(z)|-|\nu_n(z)|}\quad\leqslant\quad
Q(z)\,\in\,L_{loc}^1(D)$$
при всіх $n=1,2,\ldots.$ Якщо $f_n\rightarrow f$ локально рівномірно
в $D$ при $n\rightarrow \infty$ і $f:D\rightarrow {\Bbb C}$ --
гомеоморфізм у $D,$ то $f\in W_{\rm loc}^{1, 1}$ і, крім того,
$\partial f_n$ і $\overline{\partial}f_n$ збігаються слабко в
$L_{loc}^1$ до $\partial f$ і $\overline{\partial}f,$ відповідно.
Якщо $\mu_n\rightarrow\mu$ при $n\rightarrow\infty$ і
$\nu_n\rightarrow\nu$ при $n\rightarrow\infty$ майже скрізь, то
$\overline{\partial} f=\mu(z)\partial f+ \nu(z)\partial f$ майже
скрізь.}
\end{proposition}

\medskip
Ключовим твердженням даного розділу є наступна лема, аналоги якої
неодноразово доводились в різних ситуаціях (див., напр.,
\cite[лема~9.1]{GRSY}, \cite[лема~1]{Sev}).

\medskip
\begin{lemma}\label{lem4.6.1}{\sl\,
Нехай функції $\mu(z,w): {\Bbb D}\times{\Bbb C}\rightarrow {\Bbb D}$
і $\nu(z,w): {\Bbb D}\times{\Bbb C}\rightarrow {\Bbb D}$
задовольняють умови Каратеодорі і, крім того, $|\mu(z, w)|+|\nu(z,
w)|<1$ при всіх $w\in {\Bbb C}$ і майже всіх $z\in {\Bbb D}.$ Нехай,
крім того, існує функція $Q:{\Bbb D}\rightarrow [1, \infty]$ така,
що $K_{\mu, \nu}(z, w)\leqslant Q(z)\in L_{\rm loc}^1({\Bbb D})$ для
майже всіх $z\in {\Bbb D}$ і всіх $w\in {\Bbb C},$ де $K_{\mu, \nu}$
визначено в (\ref{eq1}). Припустимо, що для будь-якого $z_0\in {\Bbb
D}$ існують $0<\varepsilon^{\,\prime}_0\leqslant \varepsilon_0< {\rm
dist\,}(z_0,
\partial {\Bbb D}),$ $c>0,$ $0<p\leqslant 2$ і вимірна за Лебегом функція $\psi:(0, \infty)\rightarrow (0,
\infty)$ такі, що
\begin{equation}\label{eq1A} 0<I(\varepsilon,
\varepsilon_0):=\int\limits_{\varepsilon}^{\varepsilon_0}\psi(t)dt <
\infty
\end{equation}
при $\varepsilon \in(0, \varepsilon^{\,\prime}_0),$
$I(\varepsilon)\rightarrow \infty$ при $\varepsilon\rightarrow 0,$
і, при цьому,
\begin{equation}\label{eq10:}
\int\limits_{\varepsilon<|z-z_0|<\varepsilon_0}Q(z)\cdot\psi^{\,2}(|z-z_0|)
 \, dm(z)\leqslant c\cdot I^{\,p}(\varepsilon, \varepsilon_0)\,.
\end{equation}
Тоді рівняння (\ref{eq1:}) має регулярний гомеоморфний розв'язок $f$
класу $W_{\rm loc}^{1,1}$ в ${\Bbb D},$ такий що $f^{\,-1}\in W_{\rm
loc}^{\,1,2}(f({\Bbb D})).$}
\end{lemma}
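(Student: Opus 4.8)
The plan is to run the regularization (truncation) scheme for Beltrami-type equations, as in \cite[Ch.~9]{GRSY} and \cite{Sev}. Since $Q\in L^1_{\rm loc}({\Bbb D})$ we have $Q(z)<\infty$ and hence $|\mu(z,w)|+|\nu(z,w)|\leqslant q(z):=\frac{Q(z)-1}{Q(z)+1}<1$ for almost all $z$ and all $w$; taking $Q_0=Q$ I would define $\mu_n,\nu_n$ by (\ref{eq12:})--(\ref{eq12:A}), so that $|\mu_n(z,w)|+|\nu_n(z,w)|\leqslant\frac{n-1}{n+1}<1$ everywhere and the $n$-th equation is uniformly elliptic. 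Extending $\mu_n,\nu_n$ by zero outside ${\Bbb D}$ and applying the existence theorem for quasilinear uniformly elliptic Beltrami equations (\cite[Theorem~8.2]{Bo}), I obtain quasiconformal solutions $f_n$ in the whole plane, normalized by $f_n(0)=0$, $f_n(1)=1$ (and $f_n(\infty)=\infty$); set $g_n=f_n^{-1}$. The crucial point of the truncation is the bound $K_{\mu_n,\nu_n}(z,w)\leqslant Q(z)$, which holds for \emph{all} $n$, all $w$ and a.e.\ $z$; since $K_{\mu_{f_n}}(z)\leqslant K_{\mu_n,\nu_n}(z,f_n(z))$, every $f_n$ is a ring $Q$-homeomorphism with one and the same majorant $Q$.

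Next I would extract a non-degenerate limit. Conditions (\ref{eq1A})--(\ref{eq10:}) (in particular $I(\varepsilon)\to\infty$) are precisely those under which ring $Q$-homeomorphisms are equicontinuous, and under which a locally uniform limit of ring $Q$-homeomorphisms is again a ring $Q$-homeomorphism or a constant (see \cite[Ch.~9]{GRSY} and \cite[Theorems~1--2]{Sev}). With $f_n(0)=0$ giving local uniform boundedness, $\{f_n\}$ is then a normal family, and I pass to a subsequence $f_{n_k}\to f$ locally uniformly; since $f(0)=0\ne1=f(1)$, $f$ is nonconstant, hence a ring $Q$-homeomorphism, and by invariance of domain (together with properness, $f(\infty)=\infty$) a homeomorphism of ${\Bbb C}$ onto ${\Bbb C}$. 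Consequently $g_{n_k}=f_{n_k}^{-1}\to g:=f^{-1}$ locally uniformly, by the standard argument on uniform convergence of homeomorphisms with homeomorphic limit. Heuristically, the reason the limit cannot degenerate is that, applying the ring $Q$-inequality on an annulus around a hypothetical point whose $f$-image would be attained twice, with test function $\psi/I(\varepsilon,\varepsilon_0)$, the majorant (\ref{eq10:}) bounds the right-hand side by $cI^{\,p-2}(\varepsilon,\varepsilon_0)$, which stays finite, whereas the modulus on the left would have to blow up.

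It then remains to identify $f$ as a solution and to control its inverse. For almost every $z$ one has $Q(z)<\infty$, so $\mu_n(z,\cdot)=\mu(z,\cdot)$, $\nu_n(z,\cdot)=\nu(z,\cdot)$ for all $n>Q(z)$; by the Carath\'eodory property of $\mu,\nu$ and $f_{n_k}\to f$ this yields $\mu_{n_k}(z,f_{n_k}(z))\to\mu(z,f(z))$ and $\nu_{n_k}(z,f_{n_k}(z))\to\nu(z,f(z))$ a.e. Regarding $f_{n_k}$ as a solution of the linear equation $\overline{\partial}f_{n_k}=\mu_{n_k}(z,f_{n_k}(z))\,\partial f_{n_k}+\nu_{n_k}(z,f_{n_k}(z))\,\overline{\partial f_{n_k}}$, whose coefficients satisfy $K_{\mu_{n_k},\nu_{n_k}}(z,f_{n_k}(z))\leqslant Q(z)\in L^1_{\rm loc}({\Bbb D})$, Proposition~\ref{pr4.6.2} applies and gives $f\in W^{1,1}_{\rm loc}({\Bbb D})$ together with $\overline{\partial}f=\mu(z,f(z))\,\partial f+\nu(z,f(z))\,\overline{\partial f}$ a.e., i.e.\ $f$ is a homeomorphic $W^{1,1}_{\rm loc}$-solution of (\ref{eq1:}). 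For the inverse, fix a compact $K'\subset f({\Bbb D})$; a routine argument based on $f(\partial{\Bbb D})=\partial f({\Bbb D})$ shows that $g_{n_k}(K')$ lies in a fixed compact $K''\subset{\Bbb D}$ for all large $k$. Since in the plane $\Vert Dg_n(w)\Vert^2=K_{\mu_{g_n}}(w)\,J(w,g_n)$ with $K_{\mu_{g_n}}(w)=K_{\mu_{f_n}}(g_n(w))\leqslant Q(g_n(w))$, the change of variables $z=g_n(w)$ gives $\int_{K'}\Vert Dg_{n_k}(w)\Vert^2\,dm(w)=\int_{g_{n_k}(K')}K_{\mu_{f_{n_k}}}(z)\,dm(z)\leqslant\int_{K''}Q(z)\,dm(z)<\infty$, uniformly in $k$. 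Thus $\{g_{n_k}\}$ is bounded in $W^{1,2}(K')$; a weakly convergent subsequence has limit $g$ (as $g_{n_k}\to g$ uniformly), whence $f^{-1}=g\in W^{1,2}_{\rm loc}(f({\Bbb D}))$.

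The main obstacle is the second step --- producing a genuine non-degenerate homeomorphic limit from the family $\{f_n\}$ of unbounded distortions. Both the equicontinuity and the exclusion of degeneration rest on the ring $Q$-homeomorphism inequality combined with the divergence-type hypothesis (\ref{eq1A})--(\ref{eq10:}); the regularization, the appeal to Proposition~\ref{pr4.6.2}, and the weak-compactness argument for $g_n$ are, by comparison, routine.
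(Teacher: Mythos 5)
Your proposal is correct and follows essentially the same route as the paper's proof: truncation of $\mu,\nu$ by the level sets of $Q$, Bojarski's existence theorem for the uniformly elliptic approximations, the ring $Q$-homeomorphism inequality giving equicontinuity and a non-degenerate homeomorphic limit under (\ref{eq1A})--(\ref{eq10:}), and Proposition~\ref{pr4.6.2} to identify the limit as a $W^{1,1}_{\rm loc}$-solution. The only cosmetic difference is that you prove $f^{-1}\in W^{1,2}_{\rm loc}$ directly by the change-of-variables bound $\int_{K'}\Vert Dg_{n_k}\Vert^2\,dm\leqslant\int_{K''}Q\,dm$ and weak compactness, where the paper simply cites \cite[Теорема~2.4]{GRSY}; the underlying argument is the same.
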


\begin{proof} Переважно будемо користуватися схемою
доведення леми~1 в~\cite{Sev} з урахуванням відмінностей між різними
типами рівнянь Бельтрамі, зазначених при доведенні леми~9.1
в~\cite{GRSY}. Розглянемо послідовності функцій
\begin{equation}\label{eq12:B} \mu_n(z, w)= \left
\{\begin{array}{rr}
 \mu(z, w),& Q(z)\leqslant n,
\\ 0\ , & Q(z)>n
\end{array} \right.
\end{equation}
і
\begin{equation}\label{eq12:AB} \nu_n(z, w)= \left
\{\begin{array}{rr}
 \nu(z, w),& Q(z)\leqslant n,
\\ 0\ , & Q(z)>n\,.
\end{array} \right.
\end{equation}
Зауважимо, що $K_{\mu_n, \nu_n}(z,w)\leqslant n$ при майже всіх
$z\in В$ і всіх $w\in{\Bbb C}.$ Отже,
$$|\mu_n (z, w)|+|\nu_n (z, w)|\leqslant\frac{n-1}{n+1}<1$$
при всіх $w\in {\Bbb C}$ і майже всіх $w\in {\Bbb C}.$ Отже, за
\cite[теорема~8.2]{Bo} існує $n$-квазіконформний розв'язок $f_n$
рівняння
$$\overline{\partial}f_n=\partial f_n\mu_n(z,
f_n)+\overline{\partial f_n}\nu_n(z, f_n)$$
такий, що $f_n(0)=0,$ $f_n(1)=1.$ З огляду на означення відображень
$\mu_n$ і $\nu_n$ маємо: $K_{\mu_{f_n}}(z)\leqslant Q(z)$ майже
скрізь. В такому випадку, за \cite[співвідношення~(6.6), гл.~V]{LV}
кожне $f_n$ задовольняє оцінку
\begin{equation}\label{eq4:}
M(f_n(\Gamma(S(z_0, r_1), S(z_0, r_2),\,A)))\leqslant
\int\limits_{A(z_0, r_1, r_2)} Q(z)\cdot \eta^2(|z-z_0|) dm(z)
\end{equation}
у будь-якому кільці $A=A(z_0, r_1,r_2)=\{z\in {\Bbb C}:
r_1<|z-z_0|<r_2\}$ при довільному $z_0\in {\Bbb D},$ довільних
$0<r_1<r_2<{\rm dist}\,(z_0,
\partial {\Bbb D})$ і кожної вимірної за Лебегом функції $\eta:
(r_1,r_2)\rightarrow [0,\infty]$ такої, що
\begin{equation}\label{eq5:}
\int\limits_{r_1}^{r_2}\eta(r)\, dr \geqslant 1\,.
\end{equation}
Тоді за пропозицією 2 і зауваженням 2 в \cite{Sev} послідовність
$f_n$ є одностайно неперервною відносно хоральної (сферичної)
метрики $h$ в ${\Bbb R}^n$ (див. означення~12.1 у \cite{Va}). За
критерієм Арцела-Асколі (див., напр., \cite[теорема~20.4]{Va}) існує
підпослідовність $f_{n_k}$ послідовності $f_n,$ $k=1,2,\ldots ,$ яка
збігається при $k\rightarrow\infty$ до декого відображення $f$
локально рівномірно в ${\Bbb D}.$ Отже, за лемою~4.2 в~\cite{RSS}
відображення $f$ є або гомеоморфізмом у ${\Bbb D},$ або сталою в
$\overline{\Bbb R}^n.$ Друга ситуація виключена враховуючи умови
нормування $f_n(0)=0,$ $f_n(1)=1.$ Зауважимо, що для майже всіх
$z\in{\Bbb D}$ знайдеться номер $k_0=k_0(z)$ такий, що
$\mu_{n_k}(z,w)=\mu(z,w),$ $\nu_{n_k}(z,w)=\nu(z,w)$ при
$n_k\geqslant n_{k_0}(z)$ і всіх $w\in{\Bbb C}.$ Отже, для м.в. $z,$
$$\mu_{n_k}(z)= \mu_{n_k}(z, f_{n_k}(z))\rightarrow \mu(z,
f(z))\,,$$
$$\nu_{n_k}(z)= \nu_{n_k}(z, f_{n_k}(z))\rightarrow \nu(z,
f(z))$$
при $k\rightarrow\infty.$ За твердженням~\ref{pr4.6.2}
$\overline{\partial} f=\mu(z, f)\partial f+ \nu(z, f)\partial f,$
тобто, $f$ -- гомеоморфний розв'язок рівняння~(\ref{eq1:}), причому
$f\in W_{\rm loc}^{1, 1}.$

\medskip
Зауважимо, що за теоремою збіжності розв'язків рівняння Бельтрамі
$f^{\,-1}\in W^{1,2}_{\rm loc}$ (див. \cite[наслідок~2.4]{GRSY}).
Тоді за теоремою Малого-Мартіо $f^{\,-1}$ має $N$-властивість Лузіна
(див., напр., \cite[наслідок~B]{MM}). Нарешті, за теоремою
Пономарьова $J(z, f)\ne 0$ майже скрізь, див. \cite[теорема~1]{Pon}.
Лему доведено. \end{proof} $\Box$

\medskip
{\it Доведення теореми~\ref{th4.6.1}} випливає з леми~\ref{lem4.6.1}
і \cite[лема~5.1]{Sev$_2$}.

\medskip
{\bf 3. Існування неперервного розв'язку.} Аналого наступної леми
доведений в~\cite[теорема~9.1]{GRSY}, див. також
\cite[лема~5.1]{SevSkv}.

\medskip
\begin{lemma}\label{lem1}
{\sl\, Нехай $1<p\leqslant 2,$ нехай $\mu:D\rightarrow {\Bbb D}$ --
вимірна за Лебегом функція, і нехай $f_k,$ $k=1,2,\ldots $ --
послідовність гомеоморфізмів, що зберігають орієнтацію, області $D$
на себе, які належать класу $W_{\rm loc}^{1, 2}(D)$ і задовольняють
рівняння
\begin{equation}\label{eq1B}
\overline{\partial}f_n=\partial f_n\mu_n(z)+\overline{\partial
f_n}\nu_n(z)\,,
\end{equation}
де $\mu_n,$ $\nu_n$ -- вимірні за Лебегом функції, які задовольняють
нерівність $|\nu_n(z)|+|\mu_n(z)|<1$ майже скрізь. Припустимо, що
$f_n$ збігається локально рівномірно в $D$ до відображення
$f:D\rightarrow {\Bbb C},$ а послідовності $\mu_n(z)$ та $\nu_n(z)$
збігаються до $\mu(z)$ і $\nu(z),$ відповідно, при
$n\rightarrow\infty$ майже скрізь. Нехай також обернені відображення
$g_n:=f_n^{\,-1}$ належать класу $W_{\rm loc}^{1, 2}(D),$ при цьому,
при майже всіх $w\in D$
$$\int\limits_{D}K_{I, p}(w, g_k)\,dm(w)\leqslant M$$
для деякого $M>0$ і кожного $n=1,2,\ldots .$

Тоді $f\in W_{\rm loc}^{1, p}(D)$ і $\mu,$ $\nu$  -- комплексні
характеристики відображення $f,$ тобто,
$\overline{\partial}f=\partial f\mu(z)+\overline{\partial f}\nu(z)$
при майже всіх $z\in D.$
 }
\end{lemma}

\medskip
\begin{proof}
Будемо в цілому дотримуватись схеми, викладеної при
доведенні~\cite[теорема~9.1]{GRSY}, див. також
\cite[лема~5.1]{SevSkv}. Позначимо $\partial f=f_z$ і
$\overline{\partial}f=f_{\overline{z}}.$ Нехай $C$ -- довільний
компакт в $D.$ Оскільки за умовою відображення~$g_n=f_n^{\,-1}$
належать класу $W_{\rm loc}^{1, 2},$ то $g_n$ мають $N$-властивість
Лузіна, див., напр., \cite[наслідок~B]{MM}. Тоді якобіан $J(z, f_n)$
майже скрізь не дорівнює нулю, див, напр., \cite[теорема~1]{Pon},
більше того, має місце заміна змінних в інтегралі, див,
напр.,~\cite[теорема~3.2.5]{Fe}. У такому випадку, будемо мати:
$$\int\limits_{C}{\Vert f^{\,\prime}_n(z)\Vert}^p\,dm(z)=
\int\limits_C \frac{{\Vert f^{\,\prime}_n(z)\Vert}^p}{J(z,
f_n)}\cdot J(z, f_n)\,dm(z)=$$
\begin{equation}\label{eq4D}
=\int\limits_{f_n(C)}K_{I, p}(w, g_n)\, dm(w)\leqslant M<\infty\,.
\end{equation}
З~(\ref{eq4D}) випливає, що $f\in W_{\rm loc}^{1, p}$ і, крім того,
$\partial f_n$ і $\overline{\partial} f_n$ слабко збігаються в
$L_{\rm loc}^1(D)$ до $\partial f$ і $\overline{\partial} f,$
відповідно (див.~\cite[лема~III.3.5]{Re$_2$}; див. також
\cite[лема~2.1]{RSY$_3$}).

\medskip
Залишилось показати, що  відображення $f$ є розв'язком рівняння
Бельтрамі $f_{\overline{z}}=\mu(z)\cdot f_z.$ Покладемо
$\zeta(z)=\overline{\partial} f(z)-\mu(z)\partial f(z)-\overline{\nu
\partial f(z)}$ і покажемо, що $\zeta(z)=0$ майже скрізь. Нехай $B$
-- довільний круг, що лежить разом зі своїм замиканням в $D.$ За
нерівністю трикутника
\begin{equation}\label{eq9}
\left|\int\limits_B\zeta(z)\,dm(z)\right|\leqslant
I_1(n)+I_2(n)+I_3(n)\,,
\end{equation}
де
\begin{equation}\label{eq7}
I_1(n)=\left|\int\limits_B(\overline{\partial}f(z)-\overline{\partial}f_n(z))\,dm(z)\right|\,,
\end{equation}
\begin{equation}\label{eq8}
I_2(n)=\left|\int\limits_B(\mu(z)\partial f(z)-\mu_n(z)\partial
f_n(z))\,dm(z)\right|
\end{equation}
і
\begin{equation}\label{eq9C}
I_3(n)=\left|\int\limits_B(\nu(z)\overline{\partial
f(z)}-\nu_n(z)\overline{\partial f_n(z)})\,dm(z)\right|\,.
\end{equation}
З огляду на доведене вище, $I_1(n)\rightarrow 0$ при
$n\rightarrow\infty.$ Залишилось розібратися з виразами для $I_2(n)$
і $I_3(n).$ Для цього зауважимо, що за нерівністю трикутника
$I_2(n)\leqslant I^{\,\prime}_2(n)+I^{\,\prime\prime}_2(n),$ де
$$I^{\,\prime}_2(n)=
\left|\int\limits_B\mu(z)(\partial f(z)-\partial
f_n(z))\,dm(z)\right|$$
і
$$I^{\,\prime\prime}_2(n)=
\left|\int\limits_B(\mu(z)-\mu_n(z))\partial
f_n(z)\,dm(z)\right|\,.$$
З огляду на слабку збіжність $\partial f_n\rightarrow
\partial f$ в $L^1_{\rm loc}(D)$ при $n\rightarrow\infty,$ ми
отримаємо, що $I^{\,\prime}_2(n)\rightarrow 0$ при
$n\rightarrow\infty,$ оскільки $\mu\in L^{\infty}(D).$ Більше того,
оскільки за доведеним вище відображення $\partial f$ інтегровне з
квадратом, має місце абсолютна неперервність в інтегралі
$\int\limits_E|\partial f(z)|\,dm(z).$ Крім того, оскільки $\partial
f_n\rightarrow \partial f$ слабко в $L^1_{\rm loc}(D),$ то для
заданого $\varepsilon>0$ знайдеться $\delta=\delta(\varepsilon)>0$
таке, що
$$\int\limits_E|\partial f_n(z)|\,dm(z)\leqslant$$
\begin{equation}\label{eq5}\leqslant
\int\limits_E|\partial f_n(z)-\partial f(z)|\,dm(z)+
\int\limits_E|\partial f(z)|\,dm(z)<\varepsilon\,,
\end{equation}
як тільки $m(E)<\delta,$ $E\subset B,$ і номера $n$ є достатньо
великими.

\medskip
Остаточно, за теоремою Єгорова (див.~\cite[теорема~III.6.12]{Sa})
для кожного $\delta>0$ знайдеться множина $S\subset B$ така, що
$m(B\setminus S)<\delta$ і $\mu_n(z)\rightarrow \mu(z)$ рівномірно
на~$S.$ Тоді $|\mu_n(z)-\mu(z)|<\varepsilon$ при всіх $n\geqslant
n_0,$ деякому $n_0=n_0(\varepsilon)$ і всіх $z\in S.$ З огляду
на~(\ref{eq5}), а також з огляду на~(\ref{eq4D}) і нерівність
Гельдера, маємо, що
$$I^{\,\prime\prime}_2(n)\leqslant \varepsilon \int\limits_S|\partial
f_n(z)|\,dm(z)+ 2\int\limits_{B\setminus S}|\partial
f_n(z)|\,dm(z)<$$
\begin{equation}\label{eq6}
<\varepsilon\cdot\left\{\left(\int\limits_D K_{I, p}(w, g_n)\,
dm(w)\right)^{1/p}\cdot (m(B))^{(p-1)/p}+2\right\}\leqslant
\end{equation}
$$\leqslant \varepsilon\cdot\left\{M^{1/p}\cdot (m(B))^{(p-1)/p}+2\right\}$$
при тих же $n\geqslant n_0.$ Те, що
\begin{equation}\label{eq9A}
I_3(n)\rightarrow 0
\end{equation}
при $n\rightarrow\infty,$ може бути доведено аналогічно. Отже,
з~(\ref{eq7}), (\ref{eq8}), (\ref{eq9C}), (\ref{eq6}) і (\ref{eq9A})
випливає, що $\int\limits_B\zeta(z)\,dm(z)=0$ для всіх кругів $B,$
компактно вкладених в $D.$ За теоремою Лебега про диференціювання
невизначеного інтеграла (див. \cite[IV(6.3)]{Sa}) маємо, що
$\zeta(z)=0$ майже скрізь в $D.$ Лему доведено.~$\Box$
\end{proof}

\medskip
{\it Доведення теореми~\ref{th1A}.} Розглянемо по\-слі\-дов\-ність
комплекснозначних функцій
\begin{equation}\label{eq12:C} \mu_n(z, w)= \left
\{\begin{array}{rr}
 \mu(z, w),&  Q_0(z)\leqslant n,
\\ 0\ , & Q_0(z)> n
\end{array} \right.
\end{equation}
і
\begin{equation}\label{eq12:D} \nu_n(z, w)= \left
\{\begin{array}{rr}
 \nu(z, w),& Q_0(z)\leqslant n,
\\ 0\ , & Q_0(z)> n\,,
\end{array} \right.
\end{equation}
де $Q_0(z)$ визначається співвідношенням~(\ref{eq10A}). Нехай $f_n$
-- гомеоморфний $ACL$-розв'язок рівняння~(\ref{eq1:}), в якому
покладемо $\mu\mapsto\mu_n(z, w),$ $\nu\mapsto\nu_n(z, w),$ такий що
$f_n(0)=0,$ $f_n(1)=1$ (вказаний розв'язок визначено коректно з
огляду на \cite[теорема~8.2]{Bo}). Зауважимо, що відображення
$g_n=f^{\,-1}_n$ є квазіконформним; зокрема, воно є диференційовним
майже скрізь і належить класу~$W_{\rm loc}^{1, 2}({\Bbb D}).$
За~\cite[теорема~6.10]{MRSY$_1$} і з огляду на умову~(\ref{eq10C})
для кожного $n\in {\Bbb N}$
\begin{equation} \label{eq2*B}
M(g_n(\Gamma))\leqslant \int\limits_{\Bbb
D}K_{\mu_{g_n}}(w)\cdot\rho_*^2 (w) \,dm(w)\leqslant
\int\limits_{\Bbb D}Q(w)\cdot\rho_*^2 (w) \,dm(w)
\end{equation}
для довільної сім'ї кривих $\Gamma$ в ${\Bbb D}$ і кожної функції
$\rho_*\in {\rm adm}\,\Gamma,$ де $M$ -- модуль сім'ї кривих (див.,
напр.,~\cite[розд.~6]{Va}). За~\cite[теорема~1.1]{SevSkv} сім'я
відображень $f_n$ одностайно неперервна в ${\Bbb D}.$ Отже, з огляду
на теорему Арцела-Асколі $f_n$ є нормальною сім'єю
(див.~\cite[теорема~20.4]{Va}), іншими словами, знайдеться
підпослідовність $f_{n_l}$ послідовності $f_n,$ що збігається
локально рівномірно в ${\Bbb D}$ до деякого відображення $f:{\Bbb
D}\rightarrow \overline{{\Bbb D}}.$  Зауважимо, що кожне
відображення $f_n$ задовольняє рівняння
$$\overline{\partial} f_n=\mu_n(z, f_n(z))\partial f_n+
\nu_n(z, f_n(z))\overline{\partial f_n}\,.$$
Оскільки $Q_0(z)$ скінченна майже скрізь, для майже всіх $z\in{\Bbb
D}$ знайдеться номер $l_0=l_0(z)$ такий, що
$\mu_{n_l}(z,w)=\mu(z,w),$ $\nu_{n_l}(z,w)=\nu(z,w)$ при $l\geqslant
l_0$ і всіх $w\in{\Bbb C}.$ Оскільки за припущенням функції $\mu$ і
$\nu$ задовольняють умову Каратеодорі, будемо мати:
$$\mu_{n_l}(z, f_{n_l}(z))\rightarrow \mu(z, f(z))\,,$$
$$\nu_{n_l}(z, f_{n_l}(z))\rightarrow \nu(z, f(z))$$
при $l\rightarrow\infty.$ Тоді за лемою~\ref{lem1} відображення $f$
належить класу $W_{\rm loc}^{1, p}({\Bbb D})$ і є розв'язком
вихідного рівняння Бельтрамі~(\ref{eq1:}).~ $\Box$

\medskip
{\it Доведення наслідку~\ref{cor5}} відбувається аналогічно
доведенню наслідку~5.1 в~\cite{SevSkv}. Дійсно, за теоремою Фубіні з
умови $Q\in L^1({\Bbb D})$ випдливає вимірність інтегралів
$\int\limits_{S(x_0, r)\cap D}\,Q(x)\,d\mathcal{H}^1(x)$ як функцій
від $r$ так їх скінченність майже скрізь $0<r<\infty$ (див., напр.,
\cite[теорема~8.1.III]{Sa}). В цьому випадку, умови~(\ref{eq10B}) і
(\ref{eq10C}) виконуються при $p=2.$ Отже, існування розв'язку
рівняння~(\ref{eq1:}) і його належність класу~$W_{\rm loc}^{1,
2}({\Bbb D})$ безпосередньо випливають з теореми~\ref{th1A}.

\medskip
Крім того, за~\cite[теорема~1]{SSD}
$$|f_n(z)-f_n(z_0)|\leqslant\frac{C\cdot (\Vert
Q\Vert_1)^{1/2}}{\log^{1/2}\left(1+\frac{r_0}{|z-z_0|}\right)}\quad\forall\,\,z\in
B(z_0, r_0)$$
в довільній точці $z_0\in {\Bbb D},$ де $\Vert Q\Vert_1$ -- норма
$Q$ в $L^1({\Bbb D}),$ $C$ -- деяка стала і $0<2r_0<{\rm
dist}\,(z_0,
\partial {\Bbb D}).$ Переходячи тут до границі при
$n\rightarrow\infty,$ маємо співвідношення~(\ref{eq11A}).
Наслідок~\ref{cor5} встановлено.~$\Box$

\medskip
Припустимо тепер, що~$Q\in FMO({\Bbb D}),$ або виконується
співвідношення~(\ref{eq15:}). Тоді послідовність $g_n$ утворює
одностайно неперервну сім'ю відображень (див.~\cite[теореми~6.1 і
6.5]{RS}). Отже, з огляду на теорему Арцела-Асколі $g_n$ є
нормальною сім'єю (див.~\cite[теорема~20.4]{Va}), іншими словами,
знайдеться підпослідовність $g_{n_l}$ послідовності $g_n,$ що
збігається локально рівномірно в ${\Bbb D}$ до деякого відображення
$g:{\Bbb D}\rightarrow \overline{{\Bbb D}}.$ В силу умови нормування
$g_{n_l}(0)=0$ і $g_{n_l}(1)=1$ при всіх $l=1,2,\ldots .$ Тоді в
силу~\cite[теорема~4.1]{RSS} відображення $g$ є гомеоморфізмом в
${\Bbb D},$ крім того, за~\cite[лема~3.1]{RSS} ми маємо також, що
$f_{n_l}\rightarrow f=g^{\,-1}$ при $l\rightarrow\infty$ локально
рівномірно в ${\Bbb D}.$ Далі застосуємо схему міркувань,
використану вище у випадку інтегровної функції $Q.$ Оскільки
$\mu_n(z)\rightarrow \mu(z)$ при $n\rightarrow\infty$ і при майже
всіх $z\in {\Bbb D},$ за лемою~\ref{lem1} відображення $f$ належить
класу $W_{\rm loc}^{1, 2}({\Bbb D})$ і є розв'язком вихідного
квазілінійного рівняння Бельтрамі~(\ref{eq1:}). Наслідок
доведений.~$\Box$

\medskip
{\bf 4. Приклади.}

\medskip
\begin{example}\label{pr1}
Побудуємо приклад неперервного, але не гомеоморфного розв'язку
квазілінійного рівняння~(\ref{eq1:}), який задовольняє умови
наслідку~\ref{cor5} (зокрема, теореми~\ref{th1A}). Для спрощення
будемо розглядати ситуацію, коли $\nu(z, w)\equiv 0,$ причому,
користуємося конструкцію прикладу, наведеного
в~\cite[розділ~3]{Sev$_1$}. Нехай $p\geqslant 1$ -- довільне число і
нехай $0<\alpha<2/p.$ Як зазвичай, ми використовуємо запис
$z=re^{i\theta},$ $r\geqslant 0$ и $\theta\in [0, 2\pi).$ Покладемо
\begin{equation}\label{eq3A}\mu(z, w)= \left
\{\begin{array}{rr}
 e^{2i\theta}\cdot\frac{2r-\alpha(2r-1)}{2r+\alpha(2r-1)},& 1/2<|z|<1,
 |w|\geqslant 1,\\
e^{2i\theta}\cdot\frac{|w|^{\,\alpha}+1-\alpha(2r-1)}{|w|^{\,\alpha}+1+\alpha(2r-1)},&
1/2<|z|<1, |w|<1,
\\ 0\ , & |z|\leqslant 1/2\,.
\end{array} \right.
\end{equation}
Використовуючи співвідношення
$$\frac{\overline{\partial} f}{\partial f}=e^{2i\theta}\frac{rf_r+if_{\theta}}{rf_r-if_{\theta}}\,,$$
див. рівність~(11.129) в \cite{MRSY}, ми отримуємо, що відображення
\begin{equation}\label{eq4A}f(z)=\left
\{\begin{array}{rr}
 \frac{z}{|z|}(2|z|-1)^{1/\alpha},& 1/2<|z|<1,
\\ 0\ , & |z|\leqslant 1/2
\end{array} \right.
\end{equation}
є розв'язком рівняння~$f_{\overline{z}}=\mu(z, f(z))\cdot f_z,$ де
функція $\mu$ задається співвідношенням~(\ref{eq3A}). Зауважимо, що
існування розв'язку вказаного рівняння забезпечується
теоремою~\ref{th1A} (для цього перевіримо виконання всіх умов цієї
теореми). Дійсно, функція $\varphi(x, c)=\frac{x-c}{x+c}$ має
додатну похідну при $c:=\alpha(2r-1)>0,$ тому при $x\in [0, 1]$ ця
функція досягає свого максимального значення в точці $x=1.$ Отже,
\begin{equation}\label{eq15A}
\biggl|e^{2i\theta}\cdot\frac{|w|^{\,\alpha}+1-\alpha(2r-1)}{|w|^{\,\alpha}+1+\alpha(2r-1)}\biggr|=
\frac{|w|^{\,\alpha}+1-\alpha(2r-1)}{|w|^{\,\alpha}+1+\alpha(2r-1)}\leqslant
\frac{2-\alpha(2r-1)}{2+\alpha(2r-1)}\,.
\end{equation}
Беручи до уваги дріб праворуч у~(\ref{eq15A}), покладемо
\begin{equation}\label{eq16A}
\mu(z):=e^{2i\theta}\cdot \frac{2-\alpha(2r-1)}{2+\alpha(2r-1)}\,.
\end{equation}
Для заданої співвідношенням~(\ref{eq16A}) функції $\mu$ відповідною
їй максимальною дилатацією $K_{\mu}$ буде функція
\begin{equation}\label{eq5A}K_{\mu}(z)=\left
\{\begin{array}{rr}
 \frac{2}{\alpha(2|z|-1)},& 1/2<|z|<1,
\\ 1\ , & |z|\leqslant 1/2
\end{array} \right.\,.
\end{equation}
Нехай $k>1/\alpha.$ Зауважимо, що $K_{\mu}(z)\leqslant k$ при
$|z|\geqslant \frac{2+k\alpha}{2k\alpha}$ і $K_{\mu}(z)>k$ в іншому
випадку. Нехай, як і раніше,
\begin{equation*}\label{eq5B}\mu_k(z, w)= \left
\{\begin{array}{rr}
 \mu(z, w),& K_{\mu}(z)\leqslant k,
\\ 0\ , & K_{\mu}(z)> k\,.
\end{array} \right.
\end{equation*}
Відзначимо, що розв'язками рівняння $f_{\overline{z}}=\mu_k(z,
f(z))\cdot f_z$ є відображення
\begin{equation*}\label{eq6A}f_k(z)=\left
\{\begin{array}{rr}
 \frac{z}{|z|}(2|z|-1)^{1/\alpha},& \frac{2+k\alpha}{2k\alpha}<|z|<1,
\\ \frac{z}{\left(\frac{2+k\alpha}{2k\alpha}\right)
}\cdot{\left(\frac{2}{k\alpha}\right)}^{1/\alpha}\ , & |z|\leqslant
\frac{2+k\alpha}{2k\alpha}
\end{array} \right.\,,
\end{equation*}
при цьому, обернені відображення $g_k(y)=f_k^{\,-1}(y)$ обчислюються
за формулою
\begin{equation}\label{eq7A}g_k(y)=\left
\{\begin{array}{rr}
 \frac{y(|y|^{\alpha}+1)}{2|y|},& \left(\frac{2}{k\alpha}\right)^{1/\alpha}<|y|<1,
\\ \frac{y\cdot\frac{2+k\alpha}{2k\alpha}}
{\left(\frac{2}{k\alpha}\right)^{1/\alpha}} , &
|y|\leqslant\left(\frac{2}{k\alpha}\right)^{1/\alpha}
\end{array} \right.\,.
\end{equation}
З~(\ref{eq5A}) випливає, що
\begin{equation}\label{eq7B}K_{\mu_k}(z):=\frac{(f_k)_{\overline{z}}}{(f_k)_z}=\left
\{\begin{array}{rr}
 \frac{4|z|}{2\alpha(2|z|-1)},& \frac{2+k\alpha}{2k\alpha}<|z|<1,
\\ 1\ , & |z|< |z|\leqslant
\frac{2+k\alpha}{2k\alpha}
\end{array} \right.\,.
\end{equation}
Нам слід перевірити, чи виконується~(\ref{eq10C}) для деякої
інтегровної в ${\Bbb D}$ функції $Q.$ Для цієї мети, підставимо
відображення $g_k$ з~(\ref{eq7A}) у максимальну
дилатацію~$K_{\mu_k},$ визначену рівністю~(\ref{eq7B}). Тоді
\begin{equation*}\label{eq8A}K_{\mu_{g_k}}(y):=K_{\mu_k}(g_k(y))=\left
\{\begin{array}{rr}
 \frac{|y|^{\alpha}+1}{\alpha|y|^{\alpha}},&
 \left(\frac{2}{k\alpha}\right)^{1/\alpha}<|y|<1\,,
\\ 1\ , & |y|\leqslant\left(\frac{2}{k\alpha}\right)^{1/\alpha}
\end{array} \right.\,.
\end{equation*}
Зауважимо, що $K_{\mu_{g_k}}(y)\leqslant Q(y):=
\frac{|y|^{\alpha}+1}{\alpha|y|^{\alpha}}$ при всіх $y\in {\Bbb D},$
при цьому, функція $Q$ інтегровна в ${\Bbb D}$ навіть в степені $p,$
а не тільки в степені $1$ (див. міркування, використані при
розгляді~\cite[пропозиція~6.3]{MRSY}). За побудовою $f_k(0)=0$ і
$f_k(1)=1.$ Тому всі умови наслідку~\ref{cor5} (зокрема,
теореми~\ref{th1A}) виконуються, а у якості бажаного розв'язку
рівняння~$f_{\overline{z}}=\mu(z)\cdot f_z$ можна розглянути
відображення $f=f(z),$ визначене рівністю~(\ref{eq4A}). Більше того,
з доведення цієї теореми випливає, що відображення $f$ є вказаним
там розв'язком, оскільки $f$ є локально рівномірною границею
послідовності $f_k.$ Зауважимо, що відображення $f$ не є
гомеоморфним розв'язком, також воно не є ані відкритим, ані
дискретним.
\end{example}

\medskip
\begin{example}
Нехай $f$ -- відображення, визначене співвідношенням (\ref{eq4A}).
Користуючись формулою $f_z=f_rr_z+f_{\theta}\theta_z,$ ми отримаємо,
що
$f_z=(2r-1)^{(1/\alpha)-1}\left(\frac{2}{\alpha}+\frac{2r-1}{r^2}\right).$
Звідси випливає, що $f_z$ є дійсним числом при всіх $z\in {\Bbb D},$
отже, $f_z=\overline{f_z}.$ Отже, ми можемо записати:
$$f_{\overline{z}}=\mu(z, f)f_z= \frac12\cdot\mu(z, f)f_z+\frac12\cdot\nu(z, f)\overline{f_z}\,,$$
де $\mu(z, w)=\nu(z, w)$ і $\mu(z, w)$ визначено
співвідношенням~(\ref{eq3A}). За доведеним вище в прикладі~\ref{pr1}
рівняння $f_{\overline{z}}=\mu(z, f)f_{\overline{z}}+\nu(z,
f)\overline{f_z}$ має розв'язок $f,$ визначений
співвідношенням~(\ref{eq4A}), причому коефіцієнти цього рівняння
задовольняють умови наслідку~\ref{cor5} (зокрема,
теореми~\ref{th1A}).
\end{example}

%=================Список литературы====================
%\end{fulltext}

КОНТАКТНА ІНФОРМАЦІЯ

\medskip
\noindent{{\bf Євген Олександрович Севостьянов} \\
{\bf 1.} Житомирський державний університет ім.\ І.~Франко\\
вул. Велика Бердичівська, 40 \\
м.~Житомир, Україна, 10 008 \\
{\bf 2.} Інститут прикладної математики і механіки
НАН України, \\
вул.~Добровольського, 1 \\
м.~Слов'янськ, Україна, 84 100\\
e-mail: esevostyanov2009@gmail.com}


\begin{thebibliography}{99}

{\small

\bibitem{BGR$_1$} {\it Bojarski B., Gutlyanskii V., Ryazanov V.} General Beltrami equations
and BMO // Ukrainian Math. Bull. -- 2008. -- \textbf{5,} no.~3. --
P.~305–-326.

\bibitem{BGR$_2$} {\it Bojarski B., Gutlyanskii V., Ryazanov V.} General Beltrami equations
and BMO // Compl. Variab. and Elliptic Equat. -- 2009. --
\textbf{54,} no.~10. -- P.~305–-326.

\bibitem{BGR$_3$} {\it Bojarski B., Gutlyanskii V., Ryazanov V.}
On existence and representation of solutions for general degenerate
Beltrami equations // Compl. Variab. and Elliptic Equat. -- 2013. --
\textbf{59,} no.~1. -- P.~67–-75.

\bibitem{GRSY} {\it Gutlyanskii~V.\,Ya., Ryazanov V.\,I., Srebro U.,
Yakubov E.} The Beltrami Equation: A Geometric Approach. -- New York
etc.: Springer, 2012.

\bibitem{Sev}
{\it Севостьянов~Е.А.}  О квазилинейных уравнениях типа Бельтрами с
вырождением // Матем. заметки. -- 2011. -- Т. \textbf{90}, вып. 3.
-- С. 445--453; translation On quasilinear Beltrami-type equations
with degeneration // Math. Notes. -- 2011. -- V. \textbf{90}, no.
3-4. -- P. 431--438.


\bibitem{Sev$_1$}
{\it Севостьянов~Є.О.} Про існування розв'язків рівнянь Бельтрамі з
умовами на обернені дилатації // Укр. мат. вісник. -- 2021. -- Т.
\textbf{18}, №~2. -- С. 243--254; translation On the existence of
solutions of the Beltrami equations with conditions on inverse
dilatations // J. Math. Sci. -- 2021. -- V. \textbf{258}, no. 3. --
P. 338--345.

\bibitem{SevSkv} {\it Sevost’yanov E.A. and Skvortsov S.A.}
Logarithmic H\"{o}lder continuous mappings and Beltrami equation //
Anal. Math. Phys. -- 2021. -- article number 138.

\bibitem{MRV$_1$} {\it Martio~O., Rickman~S., and
V\"{a}is\"{a}l\"{a}~J.} Definitions for quasiregular mappings //
Ann. Acad. Sci. Fenn. Ser. A1. -- 1969. -- \textbf{448}. --
P.~1--40.

\bibitem{RSY$_2$} {\it Ryazanov~V., Srebro~U. and Yakubov~E.}
Finite mean oscillation and the Beltrami equation // Israel Math. J.
-- 2006. -- \textbf{153}. -- P.~247--266.

\bibitem{Bo} {\it Bojarski B.}
Generalized solutions of a system of differential equations of the
first order of the elliptic type with discontinuous coefficients //
Mat. Sb. -- 1957.-- V.~\textbf{43(85)}. -- P.~451–-503.

\bibitem{LV}  {\it Lehto O., Virtanen K.} Quasiconformal Mappings
in the Plane. -- New York etc., Springer, 1973.

\bibitem{Va} {\it V\"{a}is\"{a}l\"{a}~J.} Lectures on $n$-Dimensional Quasiconformal
Mappings. --  Lecture Notes in Math. 229, Berlin etc.:
Springer--Verlag, 1971.

\bibitem{RSS} {\it Ryazanov~V.I., Salimov~R.R. and Sevost'yanov~E.A.} On Convergence
Analysis of Space Homeomorphisms
// Siberian Advances in Mathematics. -- 2013. -- V. \textbf{23}, no. 4. --
P.~263--293.

\bibitem{MM} {\it Maly~J. and Martio~O.} Lusin's condition $N$ and mappings of the
class $W_{loc}^{1,n}$ // J. Reine Angew. Math. -- 1995. --
V.~\textbf{458}. -- P.~19--36.

\bibitem{Pon} {\it Пономарёв~С.П.} $N^{\,-1}$-свойство отображений и
условие $(N)$ Лузина // Матем. заметки. -- 1995. --  Т.~\textbf{58}.
-- С.~411--418.

\bibitem{Sev$_2$} {\it Sevost'yanov E.A.} Equicontinuity of homeomorphisms with
unbounded characteristic // Siberian Advances in Mathematics. --
2013. -- V. \textbf{23}, no. 2. -- P.~106--122.

\bibitem{Fe} {\it Федерер~Г.} Геометрическая теория меры. -- Москва,
Наука, 1987.

\bibitem{Re$_2$} {\it Решетняк Ю.Г.} Пространственные отображения с ограниченным
искажением. -- Новосибирск, Наука, 1982.

\bibitem{RSY$_3$} {\it Ryazanov~V., Srebro U., and Yakubov E.}
On convergence theory for Beltrami equations //  Ukr. Mat. Visnyk.
-- 2008. -- \textbf{5}, №~4. -- С.~524--535;  transl. in Ukr. Math.
Bull. -- 2008. -- \textbf{5}, no~4. -- P.~517--528.

\bibitem{Sa} {\it
Сакс~С.}  Теория интеграла. -- М.: Издательство иностранной
литературы, 1949.

\bibitem{MRSY$_1$} {\it Martio~O., Ryazanov~V., Srebro~U. and
Yakubov~E.} Mappings with finite length distortion // J. d'Anal.
Math. -- 2004. -- V.~\textbf{93}. -- P.~215--236.

\bibitem{SSD} {\it Севостьянов~Є.О., Скворцов~С.О., Довгопятий~О.П.} Про негомеоморфні
відображеня з оберненою нерівністю Полецького // Укр. мат. вісник.
-- 2020. -- \textbf{17,} № 3. -- С.~414--436; translation {\it
Sevost’yanov~E.A., Skvortsov~S.A., Dovhopiatyi~O.P.} On
nonhomeomorphic mappings with the inverse Poletsky inequality //
Journal of Mathematical Sciences. -- 2021. -- \textbf{252}, no.~4.
-- P.~541--557.

\bibitem{RS} {\it Ryazanov~V., Sevost'yanov~E.} Toward the theory of ring
$Q$-homeomorphisms // Israel J. Math. -- 2008. -- \textbf{168}. --
P.~101--118.

\bibitem{MRSY} {\it Martio O., Ryazanov V., Srebro U. and Yakubov
E.} Moduli in Modern Mapping Theory. -- New York: Springer Science +
Business Media, LLC, 2009.


 }


\end{thebibliography}
\end{document}